\documentclass[11pt]{article}

\usepackage[T1]{fontenc}
\usepackage[utf8]{inputenc}
\usepackage{lmodern}
\usepackage{amsmath,amssymb,amsthm,mathtools}
\usepackage{booktabs,tabularx,array}
\usepackage{graphicx}
\usepackage{xcolor}
\usepackage{subcaption}
\usepackage{float}
\usepackage{enumitem}
\usepackage{microtype}
\usepackage[a4paper,margin=0.86in]{geometry}
\usepackage[colorlinks=true,linkcolor=blue!55!black,
            citecolor=blue!55!black,urlcolor=blue!55!black]{hyperref}

\graphicspath{{figures/}}
\setlist[itemize]{leftmargin=1.5em,itemsep=0.15em,topsep=0.25em}
\setlist[enumerate]{leftmargin=1.6em,itemsep=0.15em,topsep=0.25em}

\newtheorem{theorem}{Theorem}
\newtheorem{proposition}[theorem]{Proposition}
\newtheorem{lemma}[theorem]{Lemma}
\newtheorem{corollary}[theorem]{Corollary}
\theoremstyle{remark}
\newtheorem{remark}[theorem]{Remark}

\newcommand{\R}{\mathbb{R}}
\newcommand{\norm}[1]{\left\lVert #1\right\rVert}
\newcommand{\inner}[2]{\left\langle #1,#2\right\rangle}
\newcommand{\SAMa}{\mathrm{SAM}_{\alpha}}

\newcommand{\fmin}{f_{\inf}}

\title{\bf Stationarity Floors and Vanishing Perturbations\\
in Sharpness-Aware Minimization}

\author{
Samir Adly\thanks{Laboratoire XLIM, Universit\'e de Limoges,
123 Avenue Albert Thomas, 87060 Limoges CEDEX, France.
Email: \texttt{samir.adly@unilim.fr}}
\qquad
Ba Khiet Le\thanks{Analytical and Algebraic Methods in Optimization
Research Group, Faculty of Mathematics and Statistics,
Ton Duc Thang University, Ho Chi Minh City, Vietnam.
Email: \texttt{lebakhiet@tdtu.edu.vn}}
}

\date{}

\begin{document}
\maketitle

\begin{abstract}
We study a deterministic family of sharpness-aware minimization methods for
smooth nonconvex functions.  The perturbation is
$$
  y_k=x_k+\rho\,
  \frac{\nabla f(x_k)}{\norm{\nabla f(x_k)}^\alpha},
  \qquad 0\leq\alpha\leq 1,
$$
so that its effective radius is
$\rho\norm{\nabla f(x_k)}^{1-\alpha}$.
For $0<\alpha\leq1$, we give an explicit complexity bound above the
stationarity level $(L\rho)^{1/\alpha}$.  A one-dimensional quadratic
example reaches this level exactly, showing that the bound describes a real
limitation of the constant-parameter rule.  The unnormalized case
$\alpha=0$ is treated separately and requires $L\rho<1$.  We then
introduce a clipped rule which agrees with the constant-$\rho$ rule away
from stationary points and becomes proportional to the gradient near them.
The clipped method has
$\norm{\nabla f(x_k)}\to0$ and the usual $O(T^{-1/2})$ stationarity
bound.  Numerical tests on a quadratic function, the Rosenbrock function,
and a five-dimensional nonconvex function illustrate the stationarity floor
of the unclipped rule and the effect of clipping.
\end{abstract}

\noindent\textbf{Keywords.}
Sharpness-aware minimization; nonconvex optimization; stationarity;
vanishing perturbation; gradient method.

\smallskip
\noindent\textbf{AMS classification.}
65K05, 65K10, 90C06, 90C30.
\tableofcontents
\section{Introduction}

We consider the unconstrained problem
\begin{equation}\label{eq:problem}
  \min_{x\in\R^n} f(x),
\end{equation}
where $f$ is differentiable, bounded from below, and has a Lipschitz
continuous gradient.  Sharpness-Aware Minimization (SAM), introduced by
Foret et al.~\cite{Foret2021}, uses the iteration
\begin{equation}\label{eq:sam}
  y_k=x_k+\rho\frac{\nabla f(x_k)}{\norm{\nabla f(x_k)}},
  \qquad
  x_{k+1}=x_k-h\nabla f(y_k).
\end{equation}
The perturbation in \eqref{eq:sam} has the fixed length $\rho$.  It is
motivated by a first-order approximation of the robust objective
$$
  x\longmapsto\max_{\norm{u}\leq\rho}f(x+u).
$$
SAM has been successful in the training of neural networks, and several
works have studied its optimization and generalization properties
\cite{Andriushchenko2022,Dai2023,Si2023,Khanh2024,Oikonomou2025}.

The meaning of sharpness needs some care.  Curvature measures, robust loss
increments, and Hessian-based quantities are not equivalent.  They may also
depend on the parametrization of a neural network
\cite{Dinh2017,Tahmasebi2024}.  In this paper we do not claim that a
deterministic SAM trajectory must select a flatter minimizer.  Our purpose is
to study how the perturbation scale affects stationarity.
This point is relevant because a constant-radius normalized perturbation may
prevent convergence to a stationary point.

The closest results already establish several parts of this picture.
Si and Yun~\cite{Si2023} analyze normalized SAM with a constant radius.
For deterministic smooth nonconvex functions, their bounds contain an
additive term of order $L^2\rho^2$, and their examples show that such a
residual term cannot in general be removed.  Dai, Ahn, and
Sra~\cite{Dai2023} explain how normalization can stabilize the dynamics and
can maintain motion along a manifold of minimizers; their purpose is to
compare normalized SAM with USAM {(Unnormalized Sharpness-Aware Minimization)} rather than to quantify the floor for the
family considered below.  Khanh et al.~\cite{Khanh2024} give a broad
inexact-gradient analysis.  For normalized variants in the nonconvex case,
their exact stationarity results use diminishing perturbation radii; for
USAM, they obtain convergence by controlling a relative gradient error.
Oikonomou and Loizou~\cite{Oikonomou2025} unify SAM and USAM through a
convex combination of their perturbations and study deterministic and
stochastic convergence under several sampling and stepsize choices. 

 In the present paper,  we consider the family
\begin{equation}\label{eq:samalpha-intro}
  y_k=x_k+\rho
  \frac{\nabla f(x_k)}{\norm{\nabla f(x_k)}^\alpha},
  \qquad
  x_{k+1}=x_k-h\nabla f(y_k),
  \qquad 0\leq\alpha\leq1.
\end{equation}
The effective perturbation radius is
\begin{equation}\label{eq:radius-intro}
  r_k=\norm{y_k-x_k}
      =\rho\norm{\nabla f(x_k)}^{1-\alpha}.
\end{equation}
We call \eqref{eq:samalpha-intro} the \emph{constant-$\rho$
$\SAMa$ rule}, or the \emph{unclipped $\SAMa$ rule}.  Here
``constant'' refers to the parameter $\rho$, not to the effective radius
$r_k$, which is constant only when $\alpha=1$.
The choice $\alpha=1$ is normalized SAM, while $\alpha=0$ is the
unnormalized method considered, for example, in
\cite{Andriushchenko2022,Khanh2024}.  Intermediate values reduce the
perturbation when the gradient becomes small, but, as shown below, this
reduction is not sufficient in general to obtain exact stationarity.

Within this  setting, the contributions are as follows.
\begin{itemize}
  \item For $0<\alpha\leq1$, we derive an explicit finite-time estimate
  above the stationarity floor
  $$
      \varepsilon_\alpha=(L\rho)^{1/\alpha}.
  $$
  The dependence on $\alpha$ and on all constants is kept visible.  This
  extends the constant-radius obstruction for normalized SAM to the
  exponent family \eqref{eq:samalpha-intro}.

  \item On the quadratic $f(x)=Lx^2/2$ with $h=1/L$, the gradient norms
  converge exactly to $(L\rho)^{1/\alpha}$.  Hence this quantity cannot be
  removed from a general smooth analysis.  The same example also gives the
  necessary stability condition $L\rho<1$ for the unnormalized case.

  \item We introduce a clipped radius which directly enforces a relative
  gradient-error condition.  It keeps the constant-$\rho$ $\SAMa$
  perturbation when the gradient is not small and makes the perturbation
  proportional to the gradient near stationarity.  We prove convergence of
  the whole gradient sequence to zero and an $O(T^{-1/2})$ bound.  The
  relative-error principle is not new; the contribution is the clipping
  rule that applies it to the present $\SAMa$ family without changing the
  update outside the final stationarity region.

  \item The numerical comparisons report gradient evaluations, not only
  iterations.  This accounts for the second gradient evaluation required by
  every SAM-type update.
\end{itemize}

The paper is organized as follows. First we recall some basic notions and definitions in Section \ref{sec2}. The \texorpdfstring{constant-$\rho$}{constant-rho} rule and its
stationarity floor is considered in Section \ref{sec3}. In Section \ref{sec4}, we study a clipped perturbation with exact stationarity. Numerical examples supporting the theoretical results are given in  Section \ref{sec5}. Finally the conclusion ends the paper in Section \ref{sec6}.

\section{Setting and a basic estimate}\label{sec2}

Throughout the paper, the following assumption is used.

\medskip
\noindent\textbf{Assumption 1.}
The function $f:\R^n\to\R$ is differentiable, its gradient is
$L$-Lipschitz continuous for some $L>0$, and
$$
   \fmin:=\inf_{x\in\R^n}f(x)>-\infty.
$$
We write
$\Delta_0=f(x_0)-\fmin$.

\medskip
The smoothness assumption gives the standard descent inequality
\begin{equation}\label{eq:descent-lemma}
  f(z)\leq f(x)+\inner{\nabla f(x)}{z-x}
              +\frac{L}{2}\norm{z-x}^2
  \qquad (x,z\in\R^n);
\end{equation}
see, for example, \cite{Bertsekas2016,Nesterov2018}.

For an arbitrary perturbation $y_k$, consider
\begin{equation}\label{eq:generic-update}
   x_{k+1}=x_k-h\nabla f(y_k),
   \qquad 0<h\leq \frac1L.
\end{equation}
The following estimate will be used in all proofs.

\begin{lemma}[Perturbed descent]\label{lem:perturbed-descent}
Let Assumption~1 hold, and let $(x_k)$ be generated by
\eqref{eq:generic-update}.  Then
\begin{equation}\label{eq:perturbed-descent}
  f(x_{k+1})
  \leq f(x_k)
  -\frac{h}{2}\left[
       \norm{\nabla f(x_k)}^2
       -\norm{\nabla f(y_k)-\nabla f(x_k)}^2
     \right].
\end{equation}
Consequently,
\begin{equation}\label{eq:radius-descent}
  f(x_{k+1})
  \leq f(x_k)
  -\frac{h}{2}\left[
       \norm{\nabla f(x_k)}^2-L^2\norm{y_k-x_k}^2
     \right].
\end{equation}
\end{lemma}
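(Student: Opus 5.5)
We apply the descent inequality \eqref{eq:descent-lemma} with $x=x_k$ and $z=x_{k+1}=x_k-h\nabla f(y_k)$, and then use a polarization identity to split the inner-product term. Write $g_k=\nabla f(x_k)$ and $d_k=\nabla f(y_k)$. Inequality \eqref{eq:descent-lemma} gives
\begin{equation*}
  f(x_{k+1})\leq f(x_k)-h\inner{g_k}{d_k}+\frac{Lh^2}{2}\norm{d_k}^2 .
\end{equation*}
Since $0<h\leq 1/L$, we have $Lh^2\leq h$, so the last term is at most $\frac{h}{2}\norm{d_k}^2$. This is the only place where the stepsize restriction enters.

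Next we use the identity
\begin{equation*}
  -\inner{g_k}{d_k}=\frac12\left[\norm{d_k-g_k}^2-\norm{g_k}^2-\norm{d_k}^2\right].
\end{equation*}
Multiplied by $h$ and added to the bound above, the $-\frac{h}{2}\norm{d_k}^2$ term cancels the $+\frac{h}{2}\norm{d_k}^2$ term. What remains is exactly
\begin{equation*}
  f(x_k)-\frac{h}{2}\left[\norm{g_k}^2-\norm{d_k-g_k}^2\right],
\end{equation*}
which is \eqref{eq:perturbed-descent}. The second claim \eqref{eq:radius-descent} then follows from the Lipschitz bound $\norm{\nabla f(y_k)-\nabla f(x_k)}\leq L\norm{y_k-x_k}$, squared and inserted into the bracket. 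The sign works out because that term is subtracted inside a bracket carrying a minus sign.

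There is no real obstacle here. The only point needing care is to bound $\frac{Lh^2}{2}\norm{d_k}^2$ by $\frac{h}{2}\norm{d_k}^2$ before applying the polarization identity, so that the $\norm{d_k}^2$ terms cancel exactly rather than leaving a residual multiple of $\norm{d_k}^2$. Note also that nothing about the particular form of $y_k$ is used, which is why the lemma holds for an arbitrary perturbation.
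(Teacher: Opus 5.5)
Your proof is correct and is essentially the paper's argument: the descent inequality at $x=x_k$, $z=x_{k+1}$, the polarization identity for $-\inner{g_k}{d_k}$, and then the Lipschitz bound for \eqref{eq:radius-descent}. The only difference is the order of steps, which is immaterial. The paper applies the identity first, keeps the residual $-\frac h2(1-Lh)\norm{\nabla f(y_k)}^2$, and drops it as nonpositive; bounding the quadratic term first, as you do, is not actually necessary.
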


\begin{proof}
Put $g_k=\nabla f(x_k)$ and $q_k=\nabla f(y_k)$.
By \eqref{eq:descent-lemma},
\begin{align*}
f(x_{k+1})
&\leq f(x_k)-h\inner{g_k}{q_k}
                   +\frac{Lh^2}{2}\norm{q_k}^2\\
&=f(x_k)-\frac h2\norm{g_k}^2
          -\frac h2(1-Lh)\norm{q_k}^2
          +\frac h2\norm{g_k-q_k}^2.
\end{align*}
Since $h\leq1/L$, the middle term is nonpositive.  This proves
\eqref{eq:perturbed-descent}.  Inequality \eqref{eq:radius-descent} follows
from the Lipschitz continuity of $\nabla f$.
\end{proof}

\subsection{A  sharpness interpretation}

For $r\geq0$, define the local robust increase
\begin{equation}\label{eq:robust-increase}
  {\cal S}_r(x):=
  \max_{\norm{u}\leq r}\bigl(f(x+u)-f(x)\bigr).
\end{equation}
This is one possible finite-radius sharpness measure.  We use it only to
interpret the scale of the perturbation.

\begin{proposition}\label{prop:robust-bound}
Under Assumption~1,
\begin{equation}\label{eq:robust-bound}
  0\leq{\cal S}_r(x)
  \leq r\norm{\nabla f(x)}+\frac L2r^2.
\end{equation}
For the radius $r=\rho\norm{\nabla f(x)}^{1-\alpha}$,
\begin{equation}\label{eq:robust-alpha}
 {\cal S}_r(x)
 \leq
 \rho\norm{\nabla f(x)}^{2-\alpha}
 +\frac{L\rho^2}{2}\norm{\nabla f(x)}^{2(1-\alpha)}.
\end{equation}
\end{proposition}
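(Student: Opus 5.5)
The plan is to bound ${\cal S}_r(x)$ pointwise in $u$ using the descent inequality \eqref{eq:descent-lemma}, and then maximize over the ball $\norm{u}\leq r$. The argument has three short steps.

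\textbf{Lower bound.} The choice $u=0$ is admissible in \eqref{eq:robust-increase} and gives $f(x+0)-f(x)=0$. Hence ${\cal S}_r(x)\geq0$.

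\textbf{Upper bound.} Fix $u$ with $\norm{u}\leq r$ and apply \eqref{eq:descent-lemma} with $z=x+u$:
$$
  f(x+u)-f(x)\leq\inner{\nabla f(x)}{u}+\frac L2\norm{u}^2 .
$$
Cauchy--Schwarz bounds the inner product by $\norm{\nabla f(x)}\,\norm{u}$. Using $\norm{u}\leq r$, the right-hand side is at most $r\norm{\nabla f(x)}+\frac L2 r^2$. This bound does not depend on $u$, so taking the maximum over the compact ball (the maximum exists by continuity of $f$) yields \eqref{eq:robust-bound}.

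\textbf{Specialization.} Substitute $r=\rho\norm{\nabla f(x)}^{1-\alpha}$ into \eqref{eq:robust-bound}. The first term becomes $\rho\norm{\nabla f(x)}^{2-\alpha}$ and the second becomes $\frac{L\rho^2}{2}\norm{\nabla f(x)}^{2(1-\alpha)}$, which is \eqref{eq:robust-alpha}.

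One convention needs care when $\nabla f(x)=0$. For $\alpha<1$ we have $r=0$ and both sides vanish. For $\alpha=1$ we read $0^0=1$, so $r=\rho$, and the bound becomes $\frac L2\rho^2$, which is consistent with \eqref{eq:robust-bound}. Apart from this edge case the argument is routine, and I do not expect any step to be an obstacle.
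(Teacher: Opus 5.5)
Your proposal is correct and follows essentially the same route as the paper: the lower bound via $u=0$, the upper bound via the descent inequality at $z=x+u$ (your Cauchy--Schwarz step is what the paper's ``maximizing the right-hand side'' amounts to), and then direct substitution of the radius. Your remark on the $\nabla f(x)=0$ edge case, with the convention $0^0=1$ when $\alpha=1$, is a small extra that the paper leaves implicit.
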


\begin{proof}
The lower bound follows by choosing $u=0$.  The upper bound is obtained by
applying \eqref{eq:descent-lemma} to $x+u$ and maximizing the right-hand side
over $\norm{u}\leq r$.  Substitution of the stated radius gives
\eqref{eq:robust-alpha}.
\end{proof}

When $\alpha<1$, the right-hand side of \eqref{eq:robust-alpha} vanishes
with the gradient norm.  This observation does not prove that the algorithm
selects a flat minimizer: it only controls the robust increase on the
effective scale used at the current iterate.

\section{The \texorpdfstring{constant-$\rho$}{constant-rho} rule and its
stationarity floor}\label{sec3}

Fix $\rho>0$ and $\alpha\in[0,1]$.  Starting from $x_0$, the
unclipped $\SAMa$ iteration is
\begin{equation}\label{eq:samalpha}
\left\{
\begin{aligned}
 y_k&=x_k+\rho
 \frac{\nabla f(x_k)}{\norm{\nabla f(x_k)}^\alpha},\\
 x_{k+1}&=x_k-h\nabla f(y_k),
\end{aligned}
\right.
\qquad 0<h\leq\frac1L.
\end{equation}
The method stops if $\nabla f(x_k)=0$.  This convention removes the
undefined normalized direction when $\alpha>0$.

Let $s_k=\norm{\nabla f(x_k)}$.  From Lemma~\ref{lem:perturbed-descent},
\begin{equation}\label{eq:fundamental-alpha}
 f(x_{k+1})
 \leq f(x_k)-\frac h2
 \left[s_k^2-L^2\rho^2s_k^{2(1-\alpha)}\right].
\end{equation}
For $0<\alpha\leq1$, define
\begin{equation}\label{eq:floor}
  \varepsilon_\alpha=(L\rho)^{1/\alpha}.
\end{equation}
The bracket in \eqref{eq:fundamental-alpha} is positive exactly when
$s_k>\varepsilon_\alpha$.

\begin{theorem}[Complexity above the floor]\label{thm:epsilon-complexity}
Let Assumption~1 hold, $0<\alpha\leq1$, and let $(x_k)$ be generated by
\eqref{eq:samalpha}.  For every
$\varepsilon>\varepsilon_\alpha$, there exists $k<T$ such that
$s_k\leq\varepsilon$, provided that
\begin{equation}\label{eq:T-epsilon}
 T>
 \frac{2\Delta_0}{
 h\,\varepsilon^{2(1-\alpha)}
 \bigl(\varepsilon^{2\alpha}-L^2\rho^2\bigr)}.
\end{equation}
\end{theorem}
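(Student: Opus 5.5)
The plan is a contradiction argument built on the one-step inequality \eqref{eq:fundamental-alpha}, which Lemma~\ref{lem:perturbed-descent} already supplies. Suppose $s_k>\varepsilon$ for every $k=0,\dots,T-1$. In particular $s_k>0$, so the stopping rule never triggers and all iterates up to $x_T$ are well defined. We then bound the bracket in \eqref{eq:fundamental-alpha} from below uniformly in $k$ and telescope.

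The key step is a monotonicity observation. Write the bracket as
$$\phi(s)=s^2-L^2\rho^2s^{2(1-\alpha)}=s^{2(1-\alpha)}\bigl(s^{2\alpha}-L^2\rho^2\bigr).$$
For $0<\alpha\leq1$ and $s\ge\varepsilon>\varepsilon_\alpha$, the first factor $s^{2(1-\alpha)}$ is nondecreasing and positive. The second factor $s^{2\alpha}-L^2\rho^2$ is increasing in $s$. It is also strictly positive at $s=\varepsilon$, since $\varepsilon^{2\alpha}>\varepsilon_\alpha^{2\alpha}=L^2\rho^2$. A product of two nonnegative nondecreasing functions is nondecreasing, so
$$\phi(s_k)\ge\phi(\varepsilon)=\varepsilon^{2(1-\alpha)}\bigl(\varepsilon^{2\alpha}-L^2\rho^2\bigr)=:c_\varepsilon>0$$
for each $k<T$. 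When $\alpha=1$ the first factor is identically $1$, and the argument goes through unchanged.

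Next, inserting this bound into \eqref{eq:fundamental-alpha} gives $f(x_{k+1})\le f(x_k)-\tfrac h2 c_\varepsilon$ for $k=0,\dots,T-1$. Summing these inequalities and using $f(x_T)\ge\fmin$, we get $\tfrac h2 Tc_\varepsilon\le f(x_0)-f(x_T)\le\Delta_0$, that is, $T\le 2\Delta_0/(hc_\varepsilon)$. This contradicts \eqref{eq:T-epsilon}, so some $k<T$ satisfies $s_k\le\varepsilon$.

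The only delicate point is the uniform lower bound on $\phi$. A naive bound would need $\phi$ to be monotone on all of $(0,\infty)$, which is false: $\phi$ is negative below $\varepsilon_\alpha$. Monotonicity is needed only on $[\varepsilon,\infty)$, where both factors are nonnegative, and the factorization above makes that immediate. One small case is also worth stating: if the method stops at some $k<T$ because $\nabla f(x_k)=0$, then $s_k=0\le\varepsilon$ and the conclusion holds trivially.
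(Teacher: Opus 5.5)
Your proof is correct and matches the paper's argument: assume $s_k>\varepsilon$ for all $k<T$, bound the factored bracket $s_k^{2(1-\alpha)}\bigl(s_k^{2\alpha}-L^2\rho^2\bigr)$ from below by its value at $\varepsilon$, sum \eqref{eq:fundamental-alpha} over $k<T$, and contradict \eqref{eq:T-epsilon}. Your explicit monotonicity justification on $[\varepsilon,\infty)$ and your remark on the stopping rule only spell out what the paper leaves implicit.
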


\begin{proof}
Suppose that $s_k>\varepsilon$ for $k=0,\ldots,T-1$.  Then
\begin{align*}
s_k^2-L^2\rho^2s_k^{2(1-\alpha)}
&=s_k^{2(1-\alpha)}
  \bigl(s_k^{2\alpha}-L^2\rho^2\bigr)\\
&>\varepsilon^{2(1-\alpha)}
  \bigl(\varepsilon^{2\alpha}-L^2\rho^2\bigr).
\end{align*}
Summing \eqref{eq:fundamental-alpha} gives
$$
 \Delta_0>
 \frac{hT}{2}\,
 \varepsilon^{2(1-\alpha)}
 \bigl(\varepsilon^{2\alpha}-L^2\rho^2\bigr),
$$
which contradicts \eqref{eq:T-epsilon}.
\end{proof}

The next form gives a direct estimate for the best gradient norm.

\begin{corollary}\label{cor:best-gradient}
Under the assumptions of Theorem~\ref{thm:epsilon-complexity}, let
$$
  b_T=\min_{0\leq k<T}\norm{\nabla f(x_k)}
  \quad\hbox{and}\quad
  c_\alpha=(L\rho)^{2(1-\alpha)/\alpha}.
$$
Then
\begin{equation}\label{eq:best-gradient}
 b_T\leq
 \left(
 L^2\rho^2+\frac{2\Delta_0}{h\,c_\alpha T}
 \right)^{1/(2\alpha)}.
\end{equation}
\end{corollary}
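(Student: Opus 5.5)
We derive \eqref{eq:best-gradient} directly from the summed descent inequality \eqref{eq:fundamental-alpha}, using only the fact that every iterate $k<T$ has gradient norm at least $b_T$. If $b_T\leq\varepsilon_\alpha=(L\rho)^{1/\alpha}$, there is nothing to prove: the right-hand side of \eqref{eq:best-gradient} is at least $(L^2\rho^2)^{1/(2\alpha)}=\varepsilon_\alpha$. So we assume $b_T>\varepsilon_\alpha$. Then $s_k\geq b_T>\varepsilon_\alpha$ for all $k<T$; in particular no iterate is stationary, and the method runs for $T$ steps.

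Next we bound the bracket in \eqref{eq:fundamental-alpha} from below, as in the proof of Theorem~\ref{thm:epsilon-complexity}. We write it as $s_k^{2(1-\alpha)}\bigl(s_k^{2\alpha}-L^2\rho^2\bigr)$. Both factors are nondecreasing in $s_k$ and the second is positive on $s_k>\varepsilon_\alpha$, so the product is at least $b_T^{2(1-\alpha)}\bigl(b_T^{2\alpha}-L^2\rho^2\bigr)$.

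For the first factor we use the crude bound $b_T^{2(1-\alpha)}\geq\varepsilon_\alpha^{2(1-\alpha)}=(L\rho)^{2(1-\alpha)/\alpha}=c_\alpha$, valid since $1-\alpha\geq0$. This is the only place where the constant $c_\alpha$ appears, and it is why the bound is stated with $c_\alpha$ rather than a $b_T$-dependent factor.

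Summing \eqref{eq:fundamental-alpha} over $k=0,\dots,T-1$ and using $f(x_T)\geq\fmin$ gives
\[
\Delta_0\geq\frac{hT}{2}\,c_\alpha\bigl(b_T^{2\alpha}-L^2\rho^2\bigr).
\]
Rearranging yields $b_T^{2\alpha}\leq L^2\rho^2+2\Delta_0/(h c_\alpha T)$, and taking the $1/(2\alpha)$ power, which is monotone since $\alpha>0$, gives \eqref{eq:best-gradient}.

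There is no real obstacle. The only points needing care are the case split at the floor, so that the monotonicity of the second factor is used only where it is positive, and the convention that the method stops at a stationary point. That convention is harmless: if the method stops early, some $s_k=0$, so $b_T=0$ and the bound holds trivially.
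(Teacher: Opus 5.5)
Your proof is correct and follows essentially the same route as the paper: split at the floor $\varepsilon_\alpha$, bound $s_k^{2(1-\alpha)}\geq c_\alpha$ above it, sum \eqref{eq:fundamental-alpha}, and rearrange. Your intermediate bound via $b_T^{2(1-\alpha)}$ and your remarks on the stopping convention only spell out details the paper leaves implicit.
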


\begin{proof}
If $b_T\leq\varepsilon_\alpha$, the result is immediate.  Otherwise,
$s_k^{2(1-\alpha)}\geq c_\alpha$ for every $k<T$.
Summing \eqref{eq:fundamental-alpha} yields
$$
 Tc_\alpha\bigl(b_T^{2\alpha}-L^2\rho^2\bigr)
 \leq\frac{2\Delta_0}{h},
$$
which is equivalent to \eqref{eq:best-gradient}.
\end{proof}

\begin{remark}\label{rem:alpha-comparison}
The limit in \eqref{eq:best-gradient} is
$(L\rho)^{1/\alpha}$.  If $L\rho<1$, this value decreases when
$\alpha$ decreases.  However, the transient constant contains
$c_\alpha^{-1}$, which may increase at the same time.  There is therefore
no uniform statement that a smaller $\alpha$ always gives a faster method.
Moreover, the physical units of $\rho$ change with $\alpha$, so numerical
comparisons require a fixed scaling of the objective and variables.
\end{remark}

\subsection{The unnormalized endpoint}

The case $\alpha=0$ cannot be obtained by inserting $\alpha=0$ into
\eqref{eq:best-gradient}.  It must be considered separately.  In this case,
$$
  y_k=x_k+\rho\nabla f(x_k),
$$
which is commonly called unnormalized SAM.

\begin{theorem}[Unnormalized SAM]\label{thm:usam}
Let Assumption~1 hold and suppose $L\rho<1$.  For the iteration
\eqref{eq:samalpha} with $\alpha=0$ and $0<h\leq1/L$,
\begin{equation}\label{eq:usam-bound}
 \min_{0\leq k<T}\norm{\nabla f(x_k)}
 \leq
 \left(
 \frac{2\Delta_0}{
 h(1-L^2\rho^2)T}
 \right)^{1/2},
\end{equation}
and
$$
  \norm{\nabla f(x_k)}\longrightarrow0.
$$
\end{theorem}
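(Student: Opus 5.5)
With $\alpha=0$ the perturbation is $y_k-x_k=\rho\nabla f(x_k)$, so its length is $\norm{y_k-x_k}=\rho s_k$ with $s_k=\norm{\nabla f(x_k)}$. Substituting into \eqref{eq:radius-descent} of Lemma~\ref{lem:perturbed-descent}, or equivalently reading off \eqref{eq:fundamental-alpha} at $\alpha=0$, gives
$$
 f(x_{k+1})\leq f(x_k)-\frac h2\,(1-L^2\rho^2)\,s_k^2 .
$$
Because $L\rho<1$, the factor $1-L^2\rho^2$ is strictly positive. So this is a genuine sufficient-decrease inequality with no floor term, in contrast with the case $0<\alpha\leq1$. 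Note also that the direction $\nabla f(x_k)$ is defined even at stationary points, so there is no stopping convention to worry about. If the method stops at some $x_k$ with $\nabla f(x_k)=0$, the conclusions are trivial, or we can regard the sequence as constant from then on.

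I will telescope this inequality over $k=0,\dots,T-1$. Using $f(x_T)\geq\fmin$, this gives
$$
 \frac h2(1-L^2\rho^2)\sum_{k=0}^{T-1}s_k^2\leq f(x_0)-f(x_T)\leq\Delta_0 .
$$
Bounding the sum from below by $T\min_{k<T}s_k^2$ and taking square roots yields \eqref{eq:usam-bound} directly.

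For the convergence of the whole sequence, the same telescoped inequality holds uniformly in $T$. Hence
$$
 \sum_{k=0}^\infty s_k^2\leq\frac{2\Delta_0}{h(1-L^2\rho^2)}<\infty ,
$$
and the terms of a convergent series tend to zero, so $s_k\to0$.

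There is no real obstacle here. The only point requiring care is to observe that the condition $L\rho<1$ is exactly what makes the decrease coefficient positive, which is why this endpoint is not obtained from Corollary~\ref{cor:best-gradient}: there $c_\alpha$ degenerates as $\alpha\to0$.
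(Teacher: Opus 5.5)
Your proposal is correct and follows the paper's own proof: set $\alpha=0$ in \eqref{eq:fundamental-alpha}, use $L\rho<1$ to make the decrease coefficient $1-L^2\rho^2$ positive, and telescope to bound $\sum_k s_k^2$ by $2\Delta_0/(h(1-L^2\rho^2))$. That one bound gives both \eqref{eq:usam-bound} and $s_k\to0$. Your remark on the stopping convention is also right: at a stationary point the $\alpha=0$ update satisfies $y_k=x_k$, so the sequence simply stays constant.
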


\begin{proof}
For $\alpha=0$, inequality \eqref{eq:fundamental-alpha} becomes
$$
 f(x_{k+1})\leq f(x_k)
 -\frac h2(1-L^2\rho^2)s_k^2.
$$
Summation gives
$$
 \sum_{k=0}^{\infty}s_k^2
 \leq \frac{2\Delta_0}{h(1-L^2\rho^2)}.
$$
This proves both claims.
\end{proof}

\subsection{Exact behavior on a quadratic}

The upper floor is attained on the simplest smooth strongly convex example.

\begin{theorem}[A tight quadratic example]\label{thm:quadratic}
Let $f(x)=Lx^2/2$ on $\R$, let $h=1/L$, $\rho>0$, and $x_0\neq0$.
For constant-$\rho$ $\SAMa$ with $0<\alpha\leq1$, put
$s_k=|f'(x_k)|$.  Then
\begin{equation}\label{eq:quadratic-recursion}
  s_{k+1}=L\rho\,s_k^{1-\alpha}
\end{equation}
and
\begin{equation}\label{eq:quadratic-limit}
  s_k\longrightarrow(L\rho)^{1/\alpha}.
\end{equation}
  For $\alpha=0$,
$$
   s_{k+1}=L\rho\,s_k.
$$
Thus the gradient converges to zero if $L\rho<1$, stays constant if
$L\rho=1$, and diverges if $L\rho>1$.
\end{theorem}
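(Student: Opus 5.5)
The plan is to compute one step of the iteration explicitly, then study the resulting scalar recursion. With $f'(x)=Lx$ we have $s_k=L|x_k|$. Write $\sigma_k=\operatorname{sign}(x_k)$. Then
$$
y_k=x_k+\rho\,\frac{Lx_k}{|Lx_k|^\alpha}
=x_k+\rho\,\sigma_k s_k^{1-\alpha}.
$$
Since $h=1/L$,
$$
x_{k+1}=x_k-\tfrac1L\,Ly_k=x_k-y_k=-\rho\,\sigma_k s_k^{1-\alpha}.
$$
Multiplying by $L$ and taking absolute values gives $s_{k+1}=L\rho\, s_k^{1-\alpha}$, which is \eqref{eq:quadratic-recursion}. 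The iterate flips sign at each step, but this does not matter for $s_k$. Since $s_0>0$ and $\rho>0$, every $s_k>0$, so the method never stops and the normalized direction is always defined. The same computation with $\alpha=0$ gives $y_k=(1+L\rho)x_k$ and $x_{k+1}=-L\rho\,x_k$, hence $s_{k+1}=L\rho\,s_k$.

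For the limit \eqref{eq:quadratic-limit} with $0<\alpha\le 1$, I would pass to logarithms. Put $t_k=\log s_k$ and $c=\log(L\rho)$, so that
$$
t_{k+1}=c+(1-\alpha)t_k .
$$
This is an affine recursion with contraction factor $1-\alpha\in[0,1)$. Its fixed point is $t^*=c/\alpha$, and
$$
t_k-t^*=(1-\alpha)^k(t_0-t^*).
$$
So $t_k\to c/\alpha$, which gives $s_k\to (L\rho)^{1/\alpha}$. In the case $\alpha=1$, $s_k=L\rho$ exactly for every $k\ge1$.

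For $\alpha=0$, $s_k=(L\rho)^k s_0$ is geometric, and the three cases $L\rho<1$, $L\rho=1$, $L\rho>1$ follow at once.

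I see no real obstacle. The only point needing care is the sign bookkeeping in the one-step computation, that is, checking that $x_k/|x_k|^\alpha$ has modulus $|x_k|^{1-\alpha}$ times the sign of $x_k$ after including the factor $L$. Equivalently, one can check that the direction $\nabla f(x_k)/\|\nabla f(x_k)\|^\alpha$ has norm $s_k^{1-\alpha}$, which is exactly \eqref{eq:radius-intro}.
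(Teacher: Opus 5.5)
Your proposal is correct and follows essentially the same route as the paper: you compute one step using $h=1/L$ to get $x_{k+1}=x_k-y_k$, then pass to the logarithmic affine recursion $t_{k+1}=\log(L\rho)+(1-\alpha)t_k$, whose fixed point is $\alpha^{-1}\log(L\rho)$. Your extra remarks make explicit two details the paper leaves implicit: that $s_k>0$ for every $k$, so the method never stops, and the error formula $t_k-t^*=(1-\alpha)^k(t_0-t^*)$.
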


\begin{proof}
For $x_k\neq0$,
$$
 y_k=x_k+\rho L^{1-\alpha}
       |x_k|^{1-\alpha}\operatorname{sign}(x_k).
$$
Since $h=1/L$, we have $x_{k+1}=x_k-y_k$.  Multiplication by $L$
we obtain \eqref{eq:quadratic-recursion}.
For $\alpha>0$, taking logarithms gives
$$
 \log s_{k+1}=\log(L\rho)+(1-\alpha)\log s_k.
$$
The affine recursion converges to
$\alpha^{-1}\log(L\rho)$, proving \eqref{eq:quadratic-limit}.
The statement for $\alpha=0$ follows directly.
\end{proof}

Theorem~\ref{thm:quadratic} also shows why a comparison with optimally
stepped gradient descent must be made with care.  On the same quadratic,
gradient descent with $h=1/L$ reaches zero in one update, while normalized
SAM does not.  In the terminology used here, this is the constant-radius case
$\alpha=1$ of the constant-$\rho$ rule.

\section{A clipped perturbation with exact stationarity}\label{sec4}

Write
$$
 e_k=\nabla f(y_k)-\nabla f(x_k).
$$
Lemma~\ref{lem:perturbed-descent} gives a genuine decrease whenever
$\norm{e_k}<\norm{\nabla f(x_k)}$.  For the constant-$\rho$
$\SAMa$ rule, Lipschitz continuity gives only
$$
 \frac{\norm{e_k}}{\norm{\nabla f(x_k)}}
 \leq L\rho\,\norm{\nabla f(x_k)}^{-\alpha}.
$$
For $\alpha>0$, this upper bound grows as the gradient becomes small and
reaches one at the scale $(L\rho)^{1/\alpha}$.  This is the mechanism
behind the floor in Section~3.  We remove it by imposing the relative-error
condition
\begin{equation}\label{eq:relative-error}
 \norm{\nabla f(y_k)-\nabla f(x_k)}
 \leq\theta\norm{\nabla f(x_k)},
 \qquad 0<\theta<1.
\end{equation}

Relative-error conditions are standard in inexact-gradient analysis and are
also used in the convergence study of USAM by Khanh et
al.~\cite{Khanh2024}.  The point here is not the inequality itself.  The
following radius rule enforces it for every $\alpha\in[0,1]$, while
retaining the original constant-$\rho$ $\SAMa$ update whenever that
update already satisfies the required scale.
Let $0<\theta<1$, and define
\begin{equation}\label{eq:clipped-rho}
  \rho_k=
  \min\left\{
       \rho,\frac{\theta}{L}
       \norm{\nabla f(x_k)}^\alpha
      \right\}.
\end{equation}
The clipped iteration is
\begin{equation}\label{eq:clipped-sam}
\left\{
\begin{aligned}
 y_k&=x_k+\rho_k
 \frac{\nabla f(x_k)}{\norm{\nabla f(x_k)}^\alpha},\\
 x_{k+1}&=x_k-h\nabla f(y_k),
\end{aligned}
\right.
\qquad 0<h\leq\frac1L.
\end{equation}
Again, the method stops when $\nabla f(x_k)=0$.

For $\alpha>0$, the method is identical to the constant-$\rho$
$\SAMa$ rule whenever
\begin{equation}\label{eq:fixed-region}
  \norm{\nabla f(x_k)}
  \geq\left(\frac{L\rho}{\theta}\right)^{1/\alpha}.
\end{equation}
Below this level, the perturbation becomes
$$
 y_k-x_k=\frac{\theta}{L}\nabla f(x_k).
$$
Thus the two regions have a direct interpretation.  Above the threshold in
\eqref{eq:fixed-region}, the method is exactly the original sharpness-aware
rule.  Below the threshold, the displacement is proportional to the gradient
and satisfies \eqref{eq:relative-error}.  A value of $\theta$ close to one
keeps the constant-$\rho$ behavior closer to stationarity, but gives the
smaller descent factor $1-\theta^2$ below.  A smaller $\theta$ clips
earlier and gives a stronger worst-case descent factor.  This is the tradeoff
between preserving the original perturbation and requiring exact
stationarity.

\begin{theorem}[Convergence of clipped SAM]\label{thm:clipped}
Let Assumption~1 hold, $0\leq\alpha\leq1$, $0<\theta<1$, and
$0<h\leq1/L$.  Let $(x_k)$ be generated by \eqref{eq:clipped-sam}.
Then
\begin{equation}\label{eq:clipped-descent}
 f(x_{k+1})
 \leq f(x_k)
 -\frac h2(1-\theta^2)\norm{\nabla f(x_k)}^2.
\end{equation}
Consequently,
\begin{equation}\label{eq:clipped-rate}
 \min_{0\leq k<T}\norm{\nabla f(x_k)}
 \leq
 \left(
 \frac{2\Delta_0}{h(1-\theta^2)T}
 \right)^{1/2},
\end{equation}
and
\begin{equation}\label{eq:clipped-gradient-limit}
  \norm{\nabla f(x_k)}\longrightarrow0.
\end{equation}
\end{theorem}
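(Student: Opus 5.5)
The plan is to reduce everything to Lemma~\ref{lem:perturbed-descent}, specifically inequality \eqref{eq:radius-descent}. The only thing to check is that the clipped radius forces the effective perturbation to satisfy $L\norm{y_k-x_k}\leq\theta\norm{\nabla f(x_k)}$. After that, the descent inequality \eqref{eq:clipped-descent} follows at once, and the rate and limit come from a telescoping sum, as in the proof of Theorem~\ref{thm:usam}.

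\textbf{Step 1 (radius bound).} Assume $\nabla f(x_k)\neq0$; otherwise the method has stopped and there is nothing to prove. Write $s_k=\norm{\nabla f(x_k)}$. By \eqref{eq:clipped-sam},
$$
\norm{y_k-x_k}=\rho_k s_k^{1-\alpha}.
$$
Since $\rho_k\leq \frac{\theta}{L}s_k^\alpha$ by \eqref{eq:clipped-rho},
$$
L\norm{y_k-x_k}\leq \theta s_k^\alpha s_k^{1-\alpha}=\theta s_k .
$$
This holds uniformly for all $\alpha\in[0,1]$. For $\alpha=0$ one uses the convention $s_k^0=1$, so $\rho_k=\min\{\rho,\theta/L\}$ is a constant and no case split is required.

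\textbf{Step 2 (descent).} Insert Step 1 into \eqref{eq:radius-descent}, which is valid because $h\leq1/L$:
$$
f(x_{k+1})\leq f(x_k)-\frac h2\bigl(s_k^2-\theta^2 s_k^2\bigr).
$$
This is exactly \eqref{eq:clipped-descent}. Equivalently, Step 1 together with Lipschitz continuity gives the relative-error condition \eqref{eq:relative-error}, and \eqref{eq:perturbed-descent} can then be used directly.

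\textbf{Step 3 (rate and limit).} Sum \eqref{eq:clipped-descent} for $k=0,\dots,T-1$ and use $f(x_T)\geq\fmin$. This gives
$$
\frac h2(1-\theta^2)\sum_{k<T}s_k^2\leq\Delta_0 .
$$
Bounding the sum below by $T\min_{k<T}s_k^2$ and taking square roots yields \eqref{eq:clipped-rate}. Letting $T\to\infty$ shows $\sum_k s_k^2<\infty$, so $s_k\to0$, which is \eqref{eq:clipped-gradient-limit}. If the method stops finitely at a stationary point, the limit claim holds trivially.

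There is no real obstacle here. The only points needing care are the $\alpha=0$ convention in Step 1 and the finite-termination case in Step 3.
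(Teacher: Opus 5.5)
Your proof is correct and matches the paper's argument: the clipping bound $\rho_k\leq\frac{\theta}{L}\norm{\nabla f(x_k)}^\alpha$ gives $L\norm{y_k-x_k}\leq\theta\norm{\nabla f(x_k)}$, which is inserted into Lemma~\ref{lem:perturbed-descent} and then telescoped. The only cosmetic difference is that you substitute into \eqref{eq:radius-descent}, whereas the paper passes through \eqref{eq:relative-error} and \eqref{eq:perturbed-descent}; as you note yourself, the two are equivalent.
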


\begin{proof}
By \eqref{eq:clipped-rho},
$$
 \norm{y_k-x_k}
 =\rho_k\norm{\nabla f(x_k)}^{1-\alpha}
 \leq\frac{\theta}{L}\norm{\nabla f(x_k)}.
$$
Hence
$$
 \norm{\nabla f(y_k)-\nabla f(x_k)}
 \leq\theta\norm{\nabla f(x_k)}.
$$
Substitution into \eqref{eq:perturbed-descent} gives
\eqref{eq:clipped-descent}.  Summing this inequality yields
$$
 \sum_{k=0}^{\infty}\norm{\nabla f(x_k)}^2
 \leq\frac{2\Delta_0}{h(1-\theta^2)}.
$$
The rate \eqref{eq:clipped-rate} and the limit
\eqref{eq:clipped-gradient-limit} follow.
\end{proof}

\begin{corollary}[Robust increase on the clipped scale]
\label{cor:clipped-sharpness}
Let $r_k=\norm{y_k-x_k}$ in \eqref{eq:clipped-sam}.  Then
\begin{equation}\label{eq:clipped-sharpness}
 {\cal S}_{r_k}(x_k)
 \leq
 \frac{\theta+\theta^2/2}{L}
 \norm{\nabla f(x_k)}^2
 \longrightarrow0.
\end{equation}
\end{corollary}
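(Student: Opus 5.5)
The bound is obtained by combining Proposition~\ref{prop:robust-bound} with the radius estimate from the proof of Theorem~\ref{thm:clipped}, and the limit follows from Theorem~\ref{thm:clipped}.

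We first bound the radius. By the definition \eqref{eq:clipped-rho} of $\rho_k$ and the form of the perturbation in \eqref{eq:clipped-sam},
$$
 r_k=\rho_k\norm{\nabla f(x_k)}^{1-\alpha}
 \leq\frac{\theta}{L}\norm{\nabla f(x_k)}^{\alpha}\norm{\nabla f(x_k)}^{1-\alpha}
 =\frac{\theta}{L}\norm{\nabla f(x_k)}.
$$
This is exactly the estimate already used in the proof of Theorem~\ref{thm:clipped}. It holds for every $\alpha\in[0,1]$, since $\rho_k$ never exceeds the second entry of the minimum.

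Next we use the bound \eqref{eq:robust-bound} on the robust increase. Its right-hand side $r\norm{\nabla f(x)}+\frac L2 r^2$ is nondecreasing in $r\geq0$, so we may replace $r_k$ by its upper bound $\frac{\theta}{L}\norm{\nabla f(x_k)}$. This gives
$$
 {\cal S}_{r_k}(x_k)\leq \frac{\theta}{L}\norm{\nabla f(x_k)}^2+\frac L2\cdot\frac{\theta^2}{L^2}\norm{\nabla f(x_k)}^2
 =\frac{\theta+\theta^2/2}{L}\norm{\nabla f(x_k)}^2 ,
$$
which is the inequality in \eqref{eq:clipped-sharpness}.

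Finally, Theorem~\ref{thm:clipped}, specifically \eqref{eq:clipped-gradient-limit}, gives $\norm{\nabla f(x_k)}\to0$, so the right-hand side tends to zero. Since ${\cal S}_{r_k}(x_k)\geq0$, the robust increase itself tends to zero as well.

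If the method stops at a stationary $x_k$, then $\rho_k=0$ and $r_k=0$, so ${\cal S}_{0}(x_k)=0$ and the statement holds trivially. There is no real obstacle here. The only point needing care is the monotonicity in $r$, which justifies replacing $r_k$ by its upper bound.
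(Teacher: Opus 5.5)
Your proposal is correct and matches the paper's proof: you bound $r_k\le\theta\norm{\nabla f(x_k)}/L$, substitute this into \eqref{eq:robust-bound} (your monotonicity-in-$r$ remark justifies a step the paper leaves implicit), and conclude with \eqref{eq:clipped-gradient-limit}. One harmless slip in your side remark: at a stationary point with $\alpha=0$ you do not get $\rho_k=0$, but $r_k=0$ still holds because of the factor $\norm{\nabla f(x_k)}^{1-\alpha}$.
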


\begin{proof}
Use $r_k\leq\theta\norm{\nabla f(x_k)}/L$ in
\eqref{eq:robust-bound}, followed by
\eqref{eq:clipped-gradient-limit}.
\end{proof}

\begin{remark}
The clipping rule uses the global Lipschitz constant $L$, which is rarely
known in large-scale applications.  Any known upper bound
$\widehat L\geq L$ can replace $L$ in \eqref{eq:clipped-rho}; the
resulting displacement is more conservative and still implies
\eqref{eq:relative-error}.  A local or adaptive estimate of $L$ is another
possible choice.  For example, a backtracking procedure could increase
$\widehat L_k$ until the directly computable test
$$
 \norm{\nabla f(y_k)-\nabla f(x_k)}
 \leq\theta\norm{\nabla f(x_k)}
$$
is satisfied.  This observation suggests a practical implementation but is
not a convergence theorem for an adaptive method.  The termination of such a
search, the admissible stepsizes, and convergence with local estimates
require a separate analysis and are outside the scope of this paper.
\end{remark}

\section{Numerical illustrations}\label{sec5}

The experiments illustrate the statements proved above; they are not an
exhaustive benchmark.  We compare gradient descent (GD), normalized SAM,
unclipped $\mathrm{SAM}_{1/2}$, and clipped
$\mathrm{SAM}^{\mathrm{clip}}_{1/2}$.  A SAM-type update evaluates
$\nabla f(x_k)$ and $\nabla f(y_k)$, whereas a GD update uses one
gradient.  For this reason, the horizontal axes report gradient evaluations.
The stopping test is based on $\norm{\nabla f(x_k)}$.

The parameters are listed in Table~\ref{tab:parameters}.  The MATLAB
implementation used to produce all figures accompanies the paper.

\begin{table}[H]
\centering
\caption{Parameters of the numerical tests.  The value $\widehat L$ is
used by the clipping rule.}
\label{tab:parameters}
\small
\begin{tabular}{lccccccc}
\toprule
Problem & $h$ & $\rho$ & $\alpha$ & $\theta$ &
$\widehat L$ & maximum updates & tolerance\\
\midrule
Quadratic  & $1$        & $10^{-1}$ & $1,\frac34,\frac12$
           & $0.8$ & $1$    & $60$       & --\\
Rosenbrock & $5\,10^{-4}$ & $10^{-5}$ & $\frac12$
           & $0.8$ & $2000$ & $4\,10^5$ & $10^{-8}$\\
Five-dimensional & $5\,10^{-2}$ & $10^{-3}$ & $\frac12$
           & $0.8$ & $14$   & $10^4$     & $10^{-10}$\\
\bottomrule
\end{tabular}
\end{table}

\subsection{Quadratic floor}

We first take $f(x)=x^2/2$, $x_0=2$, and $h=1$.  The exact floors for
$\alpha=1,3/4,1/2$ are, respectively,
$$
  10^{-1},\qquad 10^{-4/3}\simeq4.64\,10^{-2},
  \qquad 10^{-2}.
$$
Figure~\ref{fig:quadratic} shows that the computed values agree with these
numbers.  With clipping, the gradient norms continue to zero.  On this
example, once clipping is active, the recursion is
$s_{k+1}=\theta s_k$.

\begin{figure}[t]
\centering
\includegraphics[width=0.88\textwidth]{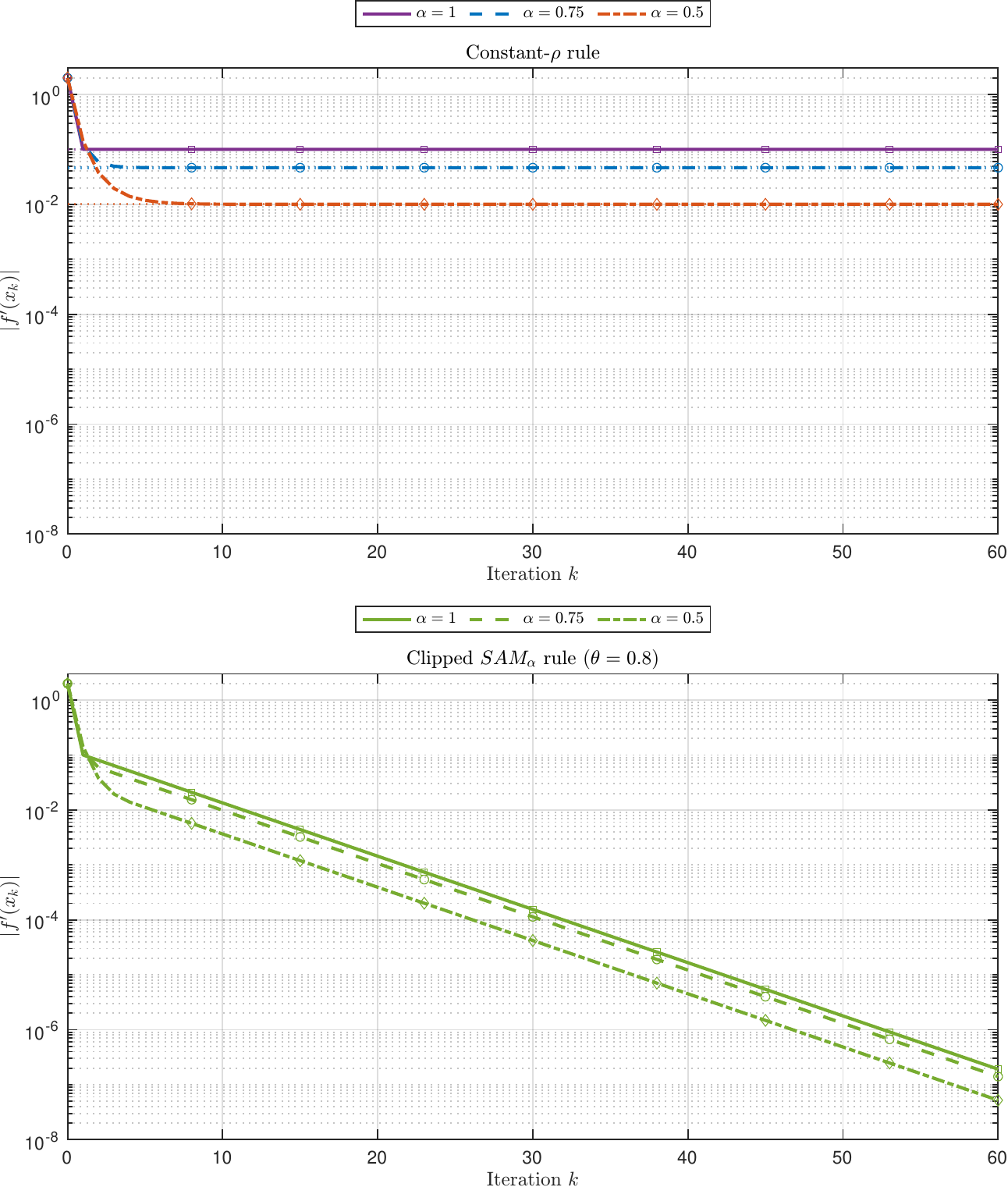}
\caption{Gradient norms for the constant-$\rho$ and clipped rules on
$f(x)=x^2/2$.  The dotted horizontal lines in the upper panel show the exact
floors $\rho^{1/\alpha}$.}
\label{fig:quadratic}
\end{figure}

\subsection{Rosenbrock function}

Consider
\begin{equation}\label{eq:rosenbrock}
 f(x_1,x_2)=100(x_2-x_1^2)^2+(1-x_1)^2,
 \qquad x_0=(-1.2,1).
\end{equation}
The minimum is $f(1,1)=0$.  The function is not globally smooth on
$\R^2$, since its Hessian is unbounded.  This test is therefore an
illustration outside the global assumption of the theorems.  The iterates
remain in the displayed bounded region and $\widehat L=2000$ is used only
for clipping.

The four trajectories are almost indistinguishable at the scale of the level
sets; see Figure~\ref{fig:rosenbrock}.  Their final stationarity is different.
GD reaches the tolerance after $87\,228$ gradient evaluations.  Clipped
$\mathrm{SAM}_{1/2}$ reaches it after $174\,433$ evaluations, close to
twice the GD cost.  Unclipped SAM and unclipped
$\mathrm{SAM}_{1/2}$ stop at the maximum budget with gradient norms
$3.35\,10^{-3}$ and
$1.12\,10^{-5}$, respectively.  The radius plot shows that clipping
becomes active near the end and removes the remaining floor.

\begin{figure}[t]
\centering
\includegraphics[width=0.94\textwidth]{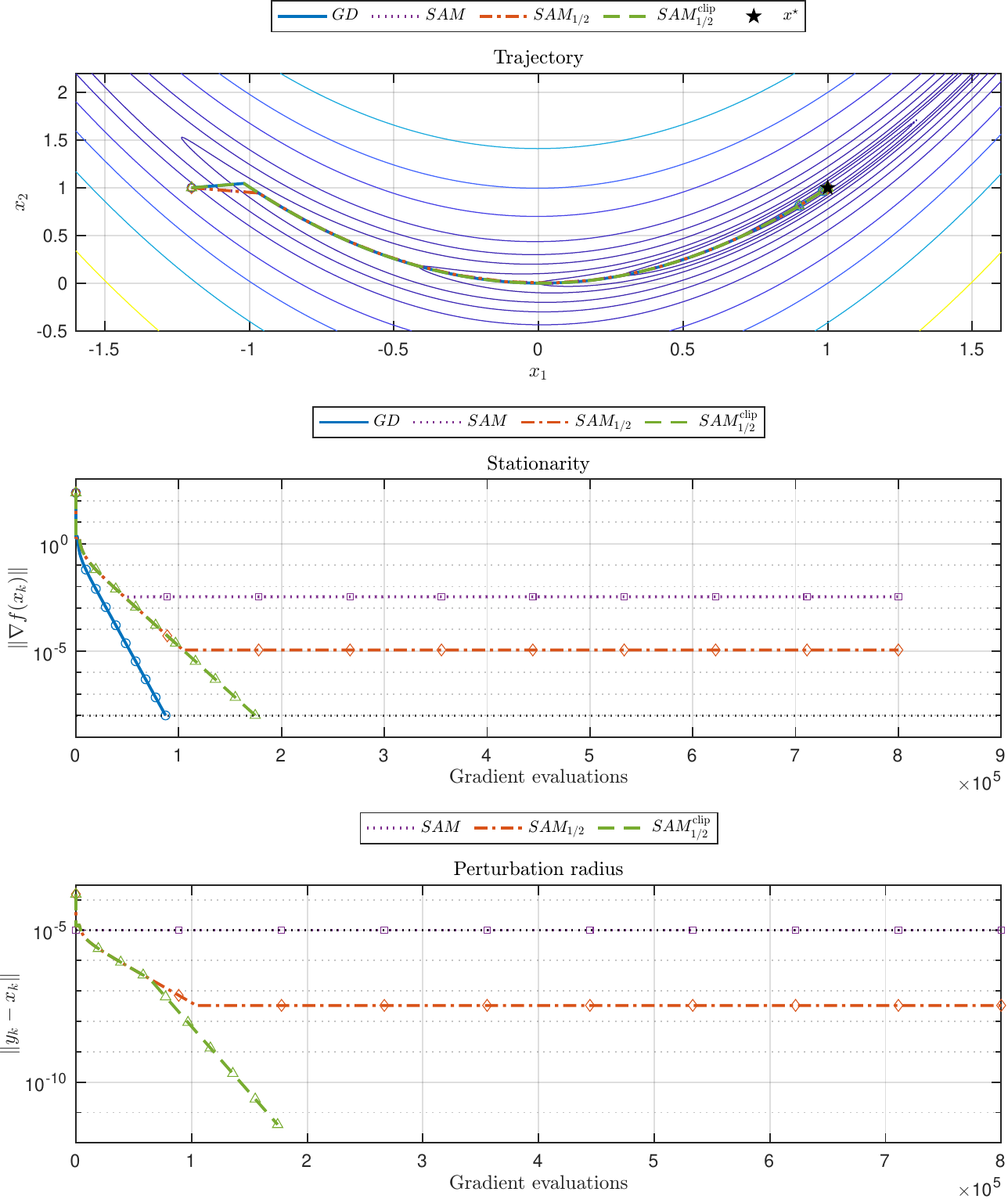}
\caption{Rosenbrock experiment.  The convergence plots use gradient
evaluations on the horizontal axis.}
\label{fig:rosenbrock}
\end{figure}

\subsection{A smooth nonconvex problem in dimension five}

We next consider
\begin{equation}\label{eq:five-dimensional}
 f(x)=\sum_{i=1}^5x_i^2+
 3\sum_{i=1}^5\sin(x_i+x_{i+1}),
 \qquad x_6=x_1,
\end{equation}
from
$$
 x_0=(1.5,-1,0.5,2,-1.5).
$$
The gradient components are
$$
 \frac{\partial f}{\partial x_i}(x)
 =2x_i+3\cos(x_i+x_{i+1})
       +3\cos(x_{i-1}+x_i),
$$
with periodic indices.  The Hessian row sums are bounded by $14$, so
$\nabla f$ is globally $14$-Lipschitz.  The choices $L=14$ and
$h=0.05<1/L$ satisfy the theoretical conditions.

The reference value
$$
 f_{\rm ref}=-12.35796126705
$$
was obtained by 80 deterministic multistart GD runs, using the random seed
one.  It is used only to display an objective gap.  The numerical summary is
given in Table~\ref{tab:results}.  As expected, normalized SAM has the largest
floor, while unclipped $\mathrm{SAM}_{1/2}$ gives a smaller one.  The
clipped method reaches the requested tolerance.  It uses more gradients than
GD, which is consistent with the cost of a SAM update.

\begin{table}[H]
\centering
\caption{Final results.  ``Gradients'' counts all evaluations of
$\nabla f$.  The last column is $f(x_k)$ for Rosenbrock and
$f(x_k)-f_{\rm ref}$ for the five-dimensional problem.}
\label{tab:results}
\small
\begin{tabular}{llrrr}
\toprule
Problem & Method & Updates & Gradients &
$\norm{\nabla f(x_k)}$ \quad Objective/gap\\
\midrule
Rosenbrock
& GD                         & 87\,227 & 87\,228  & $9.999\,10^{-9}\quad 1.252\,10^{-16}$\\
& SAM                        & 400\,000& 800\,001 & $3.346\,10^{-3}\quad 5.588\,10^{-9}$\\
& $\mathrm{SAM}_{1/2}$     & 400\,000& 800\,001 & $1.119\,10^{-5}\quad 6.256\,10^{-14}$\\
& $\mathrm{SAM}^{\rm clip}_{1/2}$
                             & 87\,216 & 174\,433 & $1.000\,10^{-8}\quad 1.252\,10^{-16}$\\
\midrule
Five-dimensional
& GD                         & 153    & 154    & $9.098\,10^{-11}\quad 3.553\,10^{-15}$\\
& SAM                        & 10\,000& 20\,001 & $7.132\,10^{-3}\quad 1.857\,10^{-6}$\\
& $\mathrm{SAM}_{1/2}$     & 10\,000& 20\,001 & $5.083\,10^{-5}\quad 9.434\,10^{-11}$\\
& $\mathrm{SAM}^{\rm clip}_{1/2}$
                             & 136    & 273    & $9.831\,10^{-11}\quad 3.553\,10^{-15}$\\
\bottomrule
\end{tabular}
\end{table}

\begin{figure}[t]
\centering
\includegraphics[width=0.94\textwidth]{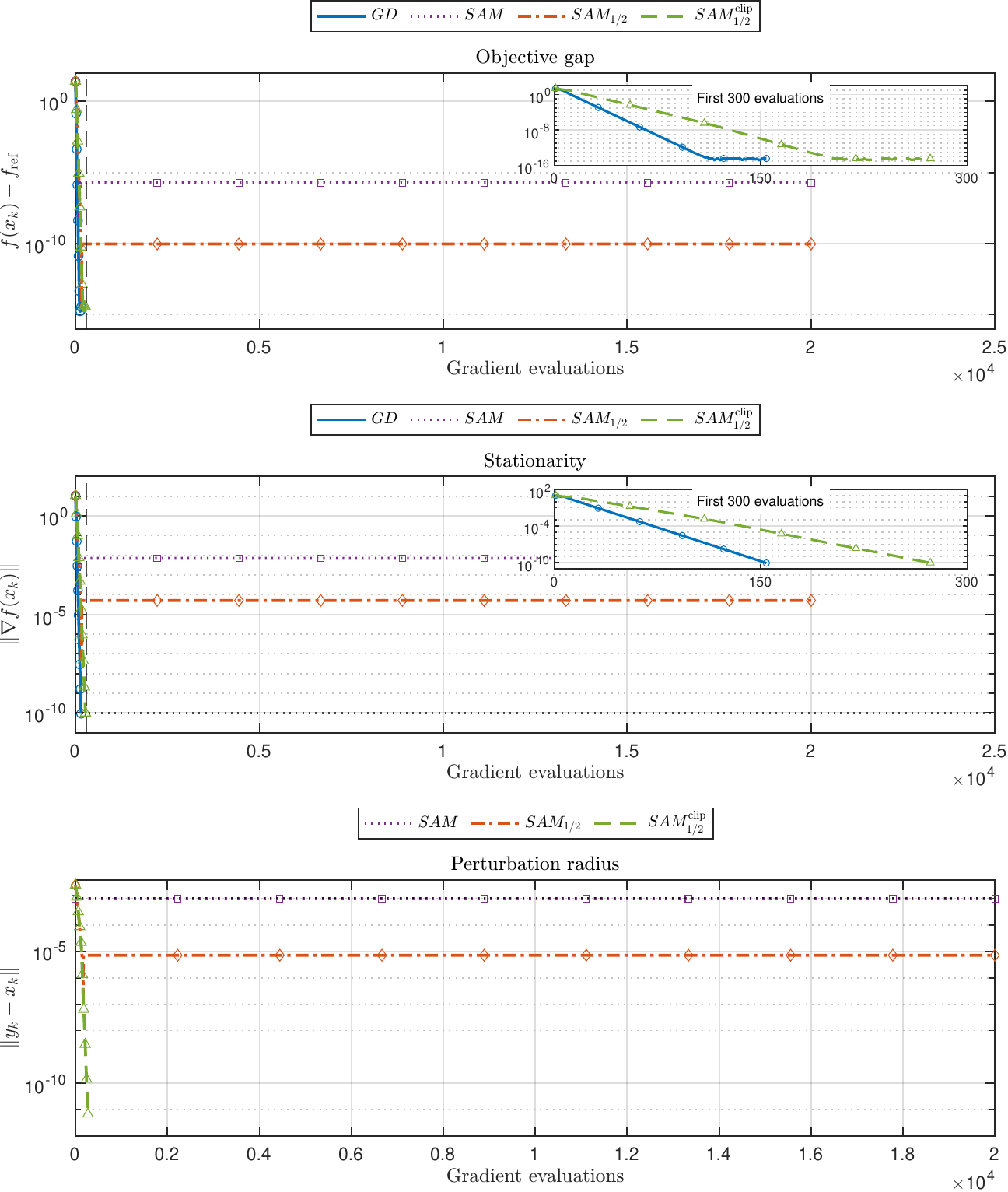}
\caption{Five-dimensional nonconvex problem.  The constant-$\rho$ rules
lead to visible stationarity floors, whereas clipping restores convergence
to the prescribed tolerance.  The insets compare GD and clipped
$\mathrm{SAM}_{1/2}$ over the first 300 gradient evaluations for the
objective gap and stationarity measure; the vertical dashed lines mark the
zoom limit.  GD reaches the tolerance with fewer gradient evaluations.}
\label{fig:five-dimensional}
\end{figure}

\clearpage
\section{Conclusion}\label{sec6}

For the constant-$\rho$ $\SAMa$ rule, the relevant stationarity scale is
$(L\rho)^{1/\alpha}$.  The finite-time estimate and the quadratic example
show that this scale is both an upper guarantee and an attainable limit.
A smaller $\alpha$ may lower the floor when $L\rho<1$, but there is no
uniform improvement because the transient constants also depend on
$\alpha$.  The endpoint $\alpha=0$ behaves differently and is stable
under the condition $L\rho<1$.

The clipped radius preserves the constant-$\rho$ rule outside the final
stationarity region.  Near a stationary point it enforces a relative
gradient error and makes the perturbation proportional to the gradient.  This
gives monotone descent and convergence of the gradient norms to zero.  The
numerical tests agree with this mechanism.  They also show that the clipped
SAM update should not be called faster than gradient descent when gradient
evaluations are counted.

The analysis is deterministic and concerns stationarity only.  It does not
show that the clipped method has the same implicit bias or generalization
behavior as the constant-$\rho$ SAM rule.  Stochastic gradients, a rigorous
line-search or adaptive-$L$ analysis, and the effect of clipping on trained
neural networks remain separate questions.


\end{document}